\newtheorem{theorem}{Theorem}[section]
\newtheorem{lemma}{Lemma}[section]
\newtheorem{proposition}{Proposition}[theorem]
\newtheorem{definition}{Definition}[theorem]
\newtheorem{corollary}{Corollary}[theorem]
\title{A proof that HT is more likely to outnumber HH than vice versa in a sequence of n coin flips}
\author{Simon Segert }
\date{}
\begin{document}

\maketitle

\section{Abstract}
Consider the following probability puzzle: A fair coin is flipped $n$ times. For each $HT$ in the resulting sequence, Bob gets a point, and for each $HH$ Alice gets a point. Who is more likely to win? We provide a proof that Bob wins more often for \textit{every} $n\geq 3$. As a byproduct, we derive the asymptotic form of the difference in win probabilities,  and obtain an efficient algorithms for their calculation.

\section{Introduction}
The puzzle described in the abstract was posed in \cite{litt}. It is subtle because even though both players have the same expected score, it turns out that their win probabilities are not equal. It is possible to algorithmically compute the win probabilities and thereby rigorously determine the winner for a specific value of $n$ \cite{ekhad}. However to our knowledge a proof that Bob wins for every value of n has not been given. \textbf{We provide here a formal proof that Bob wins more often for every n}.

The proof uses generating function technology, which allows us to derive several interesting and unexpected results as a byproduct. 
To state these, let $P_n(Bob)$ denote the probability of the event that Bob has more points after $n$ flips, and similarly for Alice. Let $\Delta_n=P_n(Bob)-P_n(Alice)$ be the difference in win probabilities.  We show that \textbf{the asymptotic expansion of the differences is $\Delta_n={\frac 1 {2\sqrt{n\pi}}}+O(n^{-3/2})$} and moreover that \textbf{$\Delta_n$ can be computed in $O(n)$ time}. Note that similar results had also been obtained by \cite{ekhad}, using somewhat different techniques than here.

There are two major steps of the proof: the first consists of establishing the following explicit formula for the generating function:
\begin{equation}f(t):=\sum_{n\geq 0} \Delta_n (2t)^n={\frac 1 2}{\frac 1 {\sqrt{(1-t)(1-2t)(2t^2+t+1)}}}-{\frac 1 2}{\frac 1 {1-t}},\end{equation} and the second consists of showing that the coefficients are positive.  We actually provide two independent arguments for the positivity of the coefficients, so in a sense we provide 1.5 proofs that Bob wins more often. The first argument uses closure properties of the class of absolutely monotonic functions, and the second relates $f(t)$ to the generating function of a certain manifestly positive combinatorial sequence.

\section{Proof of Main Result}

We will now prove that $\Delta_n>0$ for $n\geq 3$. We will split up the argument into three subsections. In Section \ref{gen_func_derivation}, we prove the claimed form of the generating function. We then give two independent arguments for the positivity of the coefficients of the generating function, in Sections \ref{first_proof} and \ref{second_proof}. 

\subsection{Derivation of generating function}\label{gen_func_derivation}

The objective of this section is to prove the following theorem: 

\begin{theorem} \label{gen_func_thm} The generating function $f(t)=\sum_n \Delta_n (2t)^n$ is given by
\begin{equation}f(t)={\frac 1 2}{\frac 1 {\sqrt{(1-t)(1-2t)(2t^2+t+1)}}}-{\frac 1 2}{\frac 1 {1-t}}\end{equation}
\end{theorem}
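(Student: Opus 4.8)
The plan is to reduce the claim to a residue computation for a bivariate generating function built from a transfer matrix. Since each of the $2^n$ strings of length $n$ has probability $2^{-n}$, I pass to counts: writing $N_n=2^n\Delta_n$, the series $f(t)=\sum_n\Delta_n(2t)^n$ becomes $\sum_n N_n t^n$, where $N_n$ is the number of length-$n$ strings with more $HT$'s than $HH$'s minus the number with more $HH$'s than $HT$'s. I track the single statistic $D=\#HT-\#HH$ with a formal variable $z$: reading a string left to right, the pair-by-pair contributions ($HH\mapsto z^{-1}$, $HT\mapsto z$, $TH,TT\mapsto 1$) are encoded by a transfer matrix, giving $\Phi_n(z):=\sum_{\text{strings of length }n}z^{D}=\mathbf{1}^{\top}T(z)^{n-1}\mathbf{1}$, where
\begin{equation}
T(z)=\left(\begin{array}{cc} z^{-1} & z \\ 1 & 1 \end{array}\right),\qquad F(t,z):=\sum_{n\ge 0}\Phi_n(z)\,t^n = 1 + t\,\mathbf{1}^{\top}\bigl(I-tT(z)\bigr)^{-1}\mathbf{1}.
\end{equation}
This $F$ is a rational function of $z$ whose denominator, after clearing $z^{-1}$, is the quadratic $P(z)=-t^2z^2+(1-t)z-t(1-t)$. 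A direct computation shows that the discriminant of $P$ equals $(1-t)(1-2t)(2t^2+t+1)$ — exactly the radicand in the target formula — which is the first sign that the claimed square root is the correct one.

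Next I express the asymmetry as a linear functional on the $z$-Laurent coefficients. Writing $\Phi_n(z)=\sum_d a_{n,d}z^d$, we have $N_n=\sum_d \mathrm{sgn}(d)\,a_{n,d}=\sum_{d>0}a_{n,d}-\sum_{d<0}a_{n,d}$. Summing the positive- and negative-degree coefficients separately turns into two contour integrals of $F(t,z)$: geometric summation $\sum_{d>0}z^{-d-1}=\frac{1}{z(z-1)}$ (valid for $|z|>1$) handles $S_+:=\sum_n t^n\sum_{d>0}a_{n,d}$, while $\sum_{d<0}z^{-d-1}=\frac{1}{1-z}$ (valid for $|z|<1$) handles $S_-$, so that $f=S_+-S_-$. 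The important and delicate point is that the two integrals live on circles of slightly different radii, just outside and just inside $|z|=1$. For small $t>0$ the poles of $F$ are the roots of $P$, one of which, $z_-(t)\approx t$, lies inside the unit circle while the other, $z_+(t)\approx t^{-2}$, lies far outside; the kernels contribute additional poles at $z=0$ and $z=1$.

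Finally I evaluate everything by the residue theorem. The contour for $S_+$ encloses $z=0$, $z=1$ and $z_-$, whereas that for $S_-$ encloses only $z_-$, so collecting residues writes $f$ as a sum of four explicit pieces. The kernel residues are elementary: using $F(t,0)=\frac{1}{1-t}$ and $F(t,1)=\frac{1}{1-2t}$ they give rational functions of $t$. The residues at $z_-$ are governed by $P'(z_\pm)=\pm\sqrt{(1-t)(1-2t)(2t^2+t+1)}$, which is precisely how the square root enters. I expect the main obstacle to be the bookkeeping in this last stage: getting the sign functional right forces the two contours to enclose different pole sets — this asymmetry is the whole reason $\Delta_n\ne 0$ — and the residue at the algebraic point $z_-$ must be separated into its rational and irrational parts. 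The irrational part should assemble into the $\frac{1}{2}\,((1-t)(1-2t)(2t^2+t+1))^{-1/2}$ term, while the rational part combines with the $z=0$ and $z=1$ residues to collapse to $-\frac{1}{2}\frac{1}{1-t}$. Verifying this split and the attendant cancellations is the one genuinely computational step; once it is done the stated closed form for $f(t)$ follows. As a consistency check, expanding the proposed formula reproduces $N_0=N_1=N_2=0$ and $N_3=1$, matching the direct enumeration showing that Bob first pulls ahead at $n=3$.
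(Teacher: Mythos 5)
Your proposal is correct, and it reaches the stated formula by a sound route; I checked the endgame: with $P(z)=-t^2z^2+(1-t)z-t(1-t)$, $R=\sqrt{(1-t)(1-2t)(2t^2+t+1)}$ and the enclosed root $z_-=\frac{(1-t)-R}{2t^2}\approx t$, the four residues you describe do assemble to $\frac{1}{2R}-\frac{1}{2(1-t)}$, i.e.\ the rational pieces $\frac{1}{1-2t}-\frac{1}{1-t}$ from the kernel poles combine with the rational part of the $z_-$ residue to give $-\frac{1}{2(1-t)}$, exactly as you predicted. Your skeleton is the same as the paper's (transfer matrix, resolvent in $t$, sign functional extracted by geometric-kernel contour integrals, square root entering through a discriminant --- indeed the paper's two quadratics share your discriminant, since $(1-t)(1-2t)(2t^2+t+1)=4t^4-4t^3+t^2-2t+1$), but the decomposition at the key step is genuinely different. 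The paper's Lemma~\ref{complex_lemma} reflects Alice's contour through $z\mapsto 1/z$, so that $\Delta_n$ becomes a single integral of $\frac{\phi_n(z)-\phi_n(1/z)}{1-z}$ over one contour around the origin (legitimized by the removable singularity at $z=1$); the price is an integrand whose denominator is a product of two quadratics, with four poles of which two must be identified as enclosed for small $t$. You instead keep a single resolvent and push the asymmetry into two different kernels, $\frac{1}{z(z-1)}$ on $|z|=1+\epsilon$ and $\frac{1}{1-z}$ on $|z|=1-\epsilon$, so you face one quadratic, one algebraic pole $z_-$, and two elementary kernel poles evaluated by $F(t,0)=\frac{1}{1-t}$ and $F(t,1)=\frac{1}{1-2t}$. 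That is arguably leaner at the residue stage; what it costs is the justification, which you correctly flagged, that summation over $n$ can be interchanged with both contour integrals --- fine for small $t$, since both circles lie in the annulus $|z_-|<|z|<|z_+|$ where $F(t,\cdot)$ converges, and this is the exact analogue of the paper's care with contour placement. One small slip: with $z_\pm=\frac{(1-t)\pm R}{2t^2}$ and $P'(z)=-2t^2z+(1-t)$ one gets $P'(z_\pm)=\mp R$, so $P'(z_-)=+R$ at the enclosed root, opposite to the sign you wrote; it is harmless as a labeling matter but would flip the residue at $z_-$ if propagated mechanically.
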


As a first step, we will derive an expression for $\Delta_n$ in terms of a complex integral \footnote{Note that, by convention, we will assume that all complex integrals are multiplied by a factor of ${\frac 1 {2\pi i}}$, which we do not explicitly write out}. 

\begin{lemma}
\label{complex_lemma}
    For any $n>0$, we have 
    \begin{equation}\Delta_n=2^{-n}\int_C \textbf{1}^t{\frac {A(z)^{n-1}-A(1/z)^{n-1}}{1-z}}\textbf{1}dz\end{equation}
where $A(z)=\left(\begin{array}{cc} 1 & 1/z\\ 1 & z\end{array}\right)$, $\textbf{1}$ is the vector of ones, and $C$ is any simple contour in the complex plane that encloses the origin.
\end{lemma}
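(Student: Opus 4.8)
The plan is to read $A(z)$ as a transfer matrix for the statistic $\#HH-\#HT$ over coin sequences, and then to recover $\Delta_n$ from the matrix powers by a single coefficient-extraction (residue) computation in which the factor $1/(1-z)$ acts as a cumulative sum. First I would set up the state space $\{H,T\}$, the state recording the most recent flip, and assign to a transition the weight $z^{+1}$ for $HH$, $z^{-1}$ for $HT$, and $1$ for $TH$ and $TT$. With the indexing implicit in the stated matrix, one checks---either by induction on $n$ or by matching the four transition types directly---that $A(z)$ is exactly this transfer matrix, so that $\textbf{1}^tA(z)^{n-1}\textbf{1}=\sum_s z^{\#HH(s)-\#HT(s)}$, the sum running over all $2^n$ sequences $s$ of length $n$. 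Here the initial vector $\textbf{1}$ accounts for the two choices of first flip and the terminal $\textbf{1}^t$ sums over final states. Writing $X(s)=\#HT(s)-\#HH(s)$ for Bob's lead, this says $\textbf{1}^tA(z)^{n-1}\textbf{1}=\sum_s z^{-X(s)}$, and replacing $z$ by $1/z$ gives $\textbf{1}^tA(1/z)^{n-1}\textbf{1}=\sum_s z^{X(s)}$.

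Second, I would exploit the role of $1/(1-z)$. Under the standing $1/(2\pi i)$ convention the contour integral extracts the coefficient of $z^{-1}$, and on a contour small enough to exclude $z=1$ we may expand $1/(1-z)=\sum_{k\geq 0}z^k$; hence for a single Laurent monomial $\int_C z^m/(1-z)\,dz$ equals $1$ when $m\leq -1$ and $0$ otherwise. Applying this termwise to $\textbf{1}^t(A(z)^{n-1}-A(1/z)^{n-1})\textbf{1}=\sum_s(z^{-X(s)}-z^{X(s)})$, the sequence $s$ contributes $+1$ exactly when $X(s)\geq 1$ (Bob wins), $-1$ exactly when $X(s)\leq -1$ (Alice wins), and $0$ when $X(s)=0$ (a tie). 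The integral therefore counts Bob-winning sequences minus Alice-winning sequences, and multiplying by $2^{-n}$ yields $P_n(\mathrm{Bob})-P_n(\mathrm{Alice})=\Delta_n$, as claimed.

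The step I expect to require the most care is the assertion that the integral is the same for every simple contour enclosing the origin, since the expansion above was valid only inside $|z|<1$ and $1/(1-z)$ has a pole at $z=1$. The resolution is that this pole is removable: at $z=1$ we have $A(z)=A(1/z)$, so the numerator $\textbf{1}^t(A(z)^{n-1}-A(1/z)^{n-1})\textbf{1}$ vanishes there and cancels the factor $1/(1-z)$. Consequently the integrand's only singularity is at the origin, and its contour integral is independent of the choice of simple enclosing curve. Getting the orientation and indexing of the transfer matrix exactly right in the first step, and verifying this removable-singularity cancellation, are the only nonroutine points; the remaining manipulations are standard.
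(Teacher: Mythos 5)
Your proof is correct, and while it runs on the same engine as the paper, its second half takes a genuinely different route. The identity $\mathbf{1}^t A(z)^{n-1}\mathbf{1}=\sum_s z^{\#HH(s)-\#HT(s)}$ is exactly what the paper establishes, except the paper derives it probabilistically: it conditions on the last flip to get recurrences for $p_{n,k,T}$ and $p_{n,k,H}$, passes to probability generating functions, and iterates the resulting matrix recursion --- your transfer-matrix count is that same computation in combinatorial clothing, so no real difference there (and transposition is immaterial since the matrix is sandwiched between all-ones vectors). Where you diverge is the extraction of $\Delta_n$: the paper represents each $P(Y_n=k)$ by the residue formula, sums a geometric series to get $P_n(\mathrm{Bob})=\int_C \phi_n(z)/(1-z)\,dz$ on a contour \emph{inside} the unit disc, writes $P_n(\mathrm{Alice})$ as an integral over a contour \emph{outside} the closed unit disc, and then substitutes $z\mapsto 1/z$, carefully tracking the sign from the orientation reversal, to land both integrals on a common contour. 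You instead antisymmetrize first and evaluate the single integral monomial-by-monomial via $\int_C z^m/(1-z)\,dz=1$ for $m\le -1$ and $0$ otherwise, so each sequence contributes $+1$, $-1$, or $0$ according to whether Bob wins, Alice wins, or the game ties. This sidesteps the two-contour bookkeeping and the inversion sign entirely (a subtlety the paper has to call out explicitly), at the mild cost of never exhibiting $P_n(\mathrm{Bob})$ and $P_n(\mathrm{Alice})$ individually as contour integrals. Both arguments close identically: since $A(1)=A(1/1)$, the numerator vanishes at $z=1$, the pole of $1/(1-z)$ is removable, the integrand is singular only at the origin, and the contour may therefore be any simple curve enclosing it --- you correctly identified this as the one point requiring care.
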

\begin{proof}
Define the random variable $X_n$ to be the outcome of the nth flip (i.e. H or T), and define the random variable $Y_n$ to be Alice's score minus Bob's score after n flips. Finally, consider the values

\begin{equation}p_{n,k,T}=P(Y_n=k|X_n=T)\end{equation}
with $p_{n,k,H}$ defined analagously. By the chain rule of probability:

\begin{equation}p_{n,k,T}={\frac 1 2}P(Y_{n}=k|X_{n-1}=T,X_n=T)+{\frac 1 2}P(Y_{n}=k|X_{n-1}=H,X_n=T)\end{equation}

For the first term, we know that neither player can get a point for a subsequence that starts with $T$. Thus if the last two flips are $TT$ and the cumulative score is $k$, then the cumulative score must already have been $k$ after only $n-1$ flips. So the first term is $P(Y_{n}=k|X_{n-1}=T,X_n=T)=P(Y_{n-1}=k|{n-1}=T,X_n=T)=p_{n-1,k,T}$ (since $Y_{n-1}$ is independent of $X_n$). As for the second term, if the last two flips are $HT$, then this gives 1 point for Bob, and therefore the score difference after the first $n-1$ flips must have been $k+1$. So this is $p_{n-1,k+1,H}$. Thus we obtain the recurrence 
\begin{equation}p_{n,k,T}={\frac 1 2}p_{n-1,k,T}+{\frac 1 2}p_{n-1,k+1,H}\end{equation}

Similarly, we can derive the recurrence
\begin{equation}p_{n,k,H}={\frac 1 2}p_{n-1,k,T}+{\frac 1 2}p_{n-1,k-1,H}\end{equation}

Next, consider the probability generating function $\phi_{n,T}(z)=\sum_n p_{n,k,T}z^k$, with $\phi_{n,H}$ defined analagously. 

By multiplying each of the two recurrences above by $z^k$ and summing over $k$, we obtain the matrix equation:

\begin{equation}\left(\begin{array}{c}\phi_{n,T}(z)\\ \phi_{n,H}(z)\end{array}\right)={\frac 1 2}\left(\begin{array}{cc}1 & z^{-1} \\ 1 & z \end{array}\right)\left(\begin{array}{c}\phi_{n-1,T}(z)\\ \phi_{n-1,H}(z)\end{array}\right)\end{equation}

and by induction: 

\begin{eqnarray}\left(\begin{array}{c}\phi_{n,T}(z)\\ \phi_{n,H}(z)\end{array}\right)&=&{\frac 1 {2^{n-1}}}\left(\begin{array}{cc}1 & z^{-1} \\ 1 & z \end{array}\right)^{n-1}\left(\begin{array}{c}\phi_{1,T}(z)\\ \phi_{1,H}(z)\end{array}\right)\\&=&{\frac 1 {2^{n-1}}}\left(\begin{array}{cc}1 & z^{-1} \\ 1 & z \end{array}\right)^{n-1}\left(\begin{array}{c}1\\ 1\end{array}\right)\end{eqnarray}

Letting $\phi_n(z)=\sum_n P(Y_n=k)z^k$ be the probability generating function for $Y_n$, we evidently have the relation $\phi_n(z)={\frac 1 2}\phi_{n,T}(z)+{\frac 1 2}\phi_{n,H}(z)$. Thus: 

\begin{equation}\phi_{n}(z)=2^{-n}\left(\begin{array}{cc}1 & 1\end{array}\right)^T\left(\begin{array}{cc}1 & z^{-1} \\ 1 & z \end{array}\right)^{n-1}\left(\begin{array}{c}1\\ 1\end{array}\right)\end{equation}

Now, note that $Y_n$ can only take finitely many values (since neither player can get more than n-1 points after n flips). In particular, $\phi_n(z)$ is a meromorphic function, so by the residue formula: 

\begin{equation}P(Y_n=k)=\int_C z^{-1-k}\phi(z)dz\end{equation}
for any simple closed contour $C$ which contains the origin. If we take $C$ to lie completely within the (open) unit disc $|z|<1$, then we have 
\begin{eqnarray}
P_n(\textit{Bob wins})& = & \sum_{k<0} P(Y_n=k)\\
& = & \sum_{k<0}\int_C z^{-1-k}\phi(z)dz\\
& = & \sum_{k\geq 0}\int_C z^k\phi_n(z)dz\\
& = & \int_C {\frac {\phi_n(z)}{1-z}}dz
\end{eqnarray}

where the geometric sum converges by assumption on $C$. By a symmetric argument, 

\begin{equation}P_n(\textit{Alice wins})=\int_{C'} {\frac {\phi_n(z)}{1-1/z}}{\frac {dz}{z^2}}\end{equation}

where now $C'$ lies in the \textit{complement} of the (closed) unit disc $|z|\leq 1$. In particular, we can take $C'$ to be the ``inverse" of C: $C'=\{1/z: z\in C\}$. Making the change of variable $z\mapsto 1/z$ we see that 

\begin{equation}P_n(\textit{Alice wins})=\int_{C} {\frac {\phi_n(1/z)}{1-z}}dz\end{equation}

Note that there is a negative sign that arises from the fact that the function $1/z$ reverses the orientation along the curve. Therefore 

\begin{equation}\Delta_n= \int_{C} {\frac {\phi_n(z)-\phi_n(1/z)}{1-z}}dz\end{equation}
where $C$ lies in the interior of the unit disc. Note that the integrand has a removable singularity at $z=1$ and no other singularities except for $z=0$; this implies that the above formula for $\Delta_n$ is valid for any $C$ that encloses the origin (i.e. we can drop the condition that $C$ lies in the unit disc). 
\end{proof}

\begin{proof} (of Theorem \ref{gen_func_thm})

Starting with the  complex integral formula, we see that, for $t$ in a sufficiently small neighborhood of the origin, we have

\begin{equation}f(t)=t\textbf{1}^t\int_C {\frac {(I-tA(z))^{-1}-(I-tA(1/z))^{-1}}{1-z}}\textbf{1}dz\end{equation}

Doing a bit of algebra (cf. Section \ref{integrand_code}), the integral becomes 

\begin{equation}
\label{expand_inverse_eqn}
f(t)=t^3\int_C{\frac {z^3-z^2-z+1}{(t^2z^2+(t-1)z+(t-t^2))((t^2-t)z^2+(1-t)z-t^2)}}dz\end{equation}

Interestingly, both quadratic equations in the denominator have the same discriminant $\Delta(t)=4t^4-4t^3+t^2-2t+1$. By the quadratic formula, we see the integral has four poles located at: 
\begin{equation}{\frac {-t+1 \pm \sqrt{\Delta(t)}}{2t^2}}\end{equation}

\begin{equation}{\frac {t-1 \pm \sqrt{\Delta(t)}}{2t^2-2t}}\end{equation}

Now, for small $t$, we have $\sqrt{\Delta(t)}=1-t+O(t^3)$. Thus as $t\to 0$ the pole ${\frac {-t+1 + \sqrt{\Delta(t)}}{2t^2}}$ tends to infinity, while the complementary one tends to zero. Similarly for the other pair of poles, the ''+" sign one tends to zero while the other one tends to infinity. Since $C$ is fixed, we see in particular that for sufficiently small $t$ that ${\frac {-t+1 - \sqrt{\Delta(t)}}{2t^2}}$ and ${\frac {t-1 + \sqrt{\Delta(t)}}{2t^2-2t}}$ will lie in $C$ while the other two will lie outside of it. Therefore, the expression for $f(t)$ reduces to computing the residues at these two poles. Since all four poles are simple, we can directly read off the residue at one of the poles $r_i$ as  

\begin{equation}Res_{r_i}=P(r_i)/Q'(r_i)\end{equation}

where $P$ and $Q$ are respectively the numerator and denominator of the expression in the integral and the prime denotes the derivative with respect to $z$. At this point, we can verify the claimed form of $f$ through mechanical calculation (or computer algebra system, cf. Section \ref{residue_code}).

\end{proof}
\subsection{First proof of positivity}
\label{first_proof}

Our goal is to show that $f^{(k)}(0)>0$ for each $k\geq 3$. This is similar to the notion of \textit{absolute monotonicity}, but differs in that we only care about the derivatives at a single point, and moreover we want a strict inequality. It will therefore be useful at this point to study this class of functions in more detail. 

\begin{definition}Let $g$ be a function with a convergent power series expansion in a neighborhood of $0$. We say that $g$ is \textbf{strictly absolutely monotonic at the origin} (or ``\textbf{samo}" for short) if $g^{(i)}(0)>0$ for every $i\geq 0$. More generally, we say that $g$ is \textbf{k-samo} if $g^{(i)}(0)\geq 0$ for $i<k$ and $g^{(i)}(0)>0$ for $i\geq k$ \footnote{Note that 0-samo is synonymous with samo}. 
\end{definition}

We can rephrase our goal as showing that $f$ is 3-samo. We now state a few simple but useful facts which mirror the corresponding results for absolutely monotonic functions: 
\begin{proposition}
The sum of two k-samo functions is k-samo. Moreover, a positive scalar multiple of a k-samo function is k-samo. 
\end{proposition}
\begin{proof}
Clear.
\end{proof}
\begin{proposition}
\label{add_const}
Let $f$ be k-samo. Then $f-f(0)$ is $max(k,1)$-samo. 
\end{proposition}

\begin{proof}
Clear.
\end{proof}

\begin{proposition}
\label{multiply}
    
Let $f$ be $k_1$-samo and $g$ be $k_2$-samo. Then $fg$ is $k_1+k_2$-samo. 

\end{proposition}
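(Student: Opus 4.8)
The plan is to pass to power-series coefficients and use the Cauchy product, which renders the coefficients of $fg$ as manifestly nonnegative sums. Since $f$ and $g$ both have convergent expansions near the origin, write $f(t)=\sum_{i\geq 0}a_i t^i$ and $g(t)=\sum_{j\geq 0}b_j t^j$. Because $a_i$ has the same sign as $f^{(i)}(0)$ (they differ only by the positive factor $i!$), the hypothesis that $f$ is $k_1$-samo is equivalent to $a_i\geq 0$ for all $i$, with $a_i>0$ whenever $i\geq k_1$; similarly $b_j\geq 0$ for all $j$, with $b_j>0$ whenever $j\geq k_2$. Writing $fg=\sum_{m\geq 0}c_m t^m$, the Cauchy product gives
\[
c_m=\sum_{i=0}^{m}a_i\,b_{m-i},
\]
and it suffices to control the sign of $c_m$, since it agrees with that of $(fg)^{(m)}(0)$.

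First I would handle the weak inequalities. Every summand $a_i b_{m-i}$ is a product of two nonnegative numbers, so $c_m\geq 0$ for \emph{all} $m\geq 0$, and in particular for all $m<k_1+k_2$. This establishes the nonnegativity half of the $(k_1+k_2)$-samo condition.

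Next I would establish strict positivity for $m\geq k_1+k_2$. Since the sum already consists of nonnegative terms, it is enough to exhibit a single strictly positive summand. I would take the term with $i=k_1$: then $a_{k_1}>0$ by hypothesis, and its partner index satisfies $m-k_1\geq k_2$ precisely because $m\geq k_1+k_2$, so $b_{m-k_1}>0$ as well. Their product is therefore strictly positive, forcing $c_m>0$.

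The argument is elementary, and I do not expect a genuine obstacle; the only delicate point is the index bookkeeping in the strict case. The threshold $k_1+k_2$ is exactly forced by the need to realize $i\geq k_1$ and $m-i\geq k_2$ simultaneously, which both explains why the proposition is sharp and confirms that no smaller threshold would work in general.
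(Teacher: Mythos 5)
Your proof is correct and is essentially the paper's own argument transposed from derivatives to power-series coefficients: the Cauchy product plays the role of the iterated product rule, and your choice of the witness index $i=k_1$ to get strict positivity for $m\geq k_1+k_2$ is exactly the paper's. If anything, your handling of the range $m<k_1+k_2$ is cleaner: the paper claims there that one factor $f^{(i)}(0)g^{(k-i)}(0)$ in each term must vanish, which is not true in general (a $k$-samo function may have strictly positive low-order derivatives), whereas you correctly observe that nonnegativity of all summands already gives the weak inequality without any case analysis.
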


\begin{proof} By the iterated product rule, the $kth$ derivative of the product is a positive linear combination of terms of the form $f^{(i)}(0)g^{(k-i)}(0)$ for $i=0,\dots, k$. We consider two cases, the first being $k<k_1+k_2$.  In this case we must have either $i<k_1$ or $k-i<k_2$, and therefore one of the terms in the product $f^{(i)}(0)g^{(k-i)}(0)$ is equal to zero.

Now consider the second case $k\geq k_1+k_2$. The derivative is clearly a sum of non-negative terms, so we just need to show that at least one of them is strictly positive. Setting $i=k_1$ we have $f^{(k_1)}(0)g^{(k-k_1)}(0)$. The first factor is clearly positive by assumption on $f$, and the second term is similarly positive because $k-k_1\geq k_2$. \end{proof}

\begin{proposition}
\label{log}
    If $log f$ is k-samo, then so is $f$. 
\end{proposition}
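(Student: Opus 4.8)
The plan is to pass to Taylor coefficients and exploit the logarithmic-derivative identity $f' = (\log f)'\, f$. Write $g = \log f$; since $\log f$ is assumed to have a convergent power series at the origin, we have $f(0) = e^{g(0)} > 0$, so $f$ is automatically strictly positive at the origin. Set $a_n = f^{(n)}(0)/n!$ and $b_n = g^{(n)}(0)/n!$. The hypothesis that $g$ is $k$-samo says $b_n \geq 0$ for $n < k$ and $b_n > 0$ for $n \geq k$, and the goal is to establish the same inequalities for the $a_n$.

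First I would record the recurrence obtained by comparing coefficients of $t^{n-1}$ in $f' = g' f$, namely
\begin{equation} n\,a_n = \sum_{j=1}^{n} j\,b_j\,a_{n-j}, \qquad a_0 = e^{b_0} > 0. \end{equation}
The advantage of the logarithmic-derivative form is that the $b_j$ enter linearly and only with index $j \geq 1$, so every coefficient $b_j$ appearing on the right is nonnegative (being either $b_j \geq 0$ for $j < k$ or $b_j > 0$ for $j \geq k$).

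The core of the argument is then a single strong induction on $n$. For nonnegativity: assuming $a_0,\dots,a_{n-1} \geq 0$, every summand $j\,b_j\,a_{n-j}$ is a product of nonnegative factors, so $a_n \geq 0$. For strict positivity when $n \geq k$: I would isolate the $j = n$ term, which equals $n\,b_n\,a_0$; here $b_n > 0$ because $n \geq k$, and $a_0 > 0$ by the base case, so this term alone is strictly positive while all others are nonnegative, forcing $a_n > 0$. Together with the base case $a_0 > 0$, which settles the index-$0$ requirement in both the $k=0$ and $k \geq 1$ regimes, this gives $a_n \geq 0$ for $n < k$ and $a_n > 0$ for $n \geq k$, i.e. $f$ is $k$-samo.

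The step I expect to require the most care is the bookkeeping between the weak and strict inequalities, particularly at the constant term: the strict positivity of the higher coefficients is ultimately driven by the strict positivity of $a_0 = e^{g(0)}$, so one must exploit the fact that the exponential never vanishes at the origin even when the hypothesis only guarantees $b_0 \geq 0$. An alternative route, which avoids the recurrence and instead leans on the previously established closure properties, is to factor $f = e^{g(0)}\, e^{\,g - g(0)}$ and expand $e^{\,g - g(0)} = \sum_{m \geq 0} (g - g(0))^m / m!$: by Proposition \ref{add_const} the function $g - g(0)$ is $\max(k,1)$-samo, so by Proposition \ref{multiply} each power $(g - g(0))^m$ is $m\max(k,1)$-samo and hence has nonnegative coefficients, while the $m=1$ term alone already contributes a strictly positive coefficient in every degree $\geq \max(k,1)$ and the leading factor $e^{g(0)} > 0$ handles the constant term; combined, this again yields that $f$ is $k$-samo.
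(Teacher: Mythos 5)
Your proposal is correct, but your primary argument is a genuinely different (and more self-contained) route than the paper's. The paper's proof is a one-line appeal to Fa\`a di Bruno's formula applied to $e^{\log f}$: there, $f^{(n)}(0)$ is a sum over partitions of products of derivatives $g^{(j)}(0)$ with positive combinatorial coefficients, all multiplied by $e^{g(0)}>0$; nonnegativity is immediate, and strict positivity for $n\geq k$ comes from the single-block partition, whose contribution is $e^{g(0)}g^{(n)}(0)>0$. Your recurrence $n\,a_n=\sum_{j=1}^{n} j\,b_j\,a_{n-j}$ obtained from $f'=g'f$ reaches the same conclusion without any partition combinatorics: the strict-positivity driver is the isolated term $n\,b_n\,a_0$, which is exactly the single-block partition term of Fa\`a di Bruno in disguise, but your induction makes the weak-versus-strict bookkeeping completely explicit, whereas the paper leaves it to the reader. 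What the paper's approach buys is brevity and a direct parallel with the classical theory of absolutely monotonic functions; what yours buys is elementarity and transparency, since only the product rule for power series and strong induction are used. Your alternative route via $f=e^{g(0)}e^{\,g-g(0)}$ is closer in spirit to the paper's (both expand the exponential of $g$, organized by powers rather than by partitions), and it has a pleasant feature worth stating explicitly: since $h=g-g(0)$ vanishes at the origin, $h^m$ contributes nothing below degree $m$, so the coefficient of $t^d$ in $\sum_{m\geq 0}h^m/m!$ is a finite sum over $m\leq d$, and no interchange-of-limits justification is needed. All three arguments hinge on the same two facts --- the hypothesis only controls $b_0\geq 0$, but $a_0=e^{b_0}$ is strictly positive regardless, and that strict positivity propagates upward paired with $b_n>0$ --- which your writeup identifies correctly as the delicate point.
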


\begin{proof} Expand the derivatives of $e^{log f}$ using Faa di Bruno's formula. \end{proof}

\begin{proposition}
\label{deriv}
    If $f$ is k-samo then $\int_0^x f(t)dt$ is $k+1$-samo.
\end{proposition}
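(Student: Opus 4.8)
The plan is to work directly from the definition, relating the Taylor coefficients of the antiderivative to those of $f$. Write $F(x)=\int_0^x f(t)\,dt$. Since $f$ has a convergent power series in a neighborhood of $0$, so does $F$, and the fundamental theorem of calculus gives $F'=f$. Differentiating repeatedly yields the single structural identity on which everything rests: $F^{(i)}(0)=f^{(i-1)}(0)$ for every $i\geq 1$, while $F^{(0)}(0)=F(0)=0$. In other words, passing to the antiderivative shifts the coefficient sequence up by one index and inserts the value $0$ at the bottom.

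With this identity in hand, checking that $F$ is $(k+1)$-samo is purely a matter of re-indexing against the definition. For $i=0$ we have $F^{(0)}(0)=0\geq 0$, which is exactly what is required of the indices below the threshold $k+1$. For $1\leq i\leq k$ (still below the threshold) we have $F^{(i)}(0)=f^{(i-1)}(0)$ with $i-1<k$, so this is $\geq 0$ by the hypothesis that $f$ is $k$-samo. Finally, for $i\geq k+1$ we have $F^{(i)}(0)=f^{(i-1)}(0)$ with $i-1\geq k$, which is strictly positive, again by the $k$-samonicity of $f$. These three cases are precisely the conditions defining $(k+1)$-samo.

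I do not anticipate any genuine obstacle here; the proof is essentially bookkeeping. The only points meriting the slightest care are the two boundaries: confirming that the single new bottom coefficient $F(0)$ equals $0$ (so that the weak inequality demanded at indices below the threshold is met with equality rather than violated), and verifying that the shift $i\mapsto i-1$ carries the threshold $k+1$ for $F$ exactly onto the threshold $k$ for $f$, so that strict positivity is inherited with no off-by-one error.
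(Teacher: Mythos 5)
Your proof is correct: the identity $F^{(i)}(0)=f^{(i-1)}(0)$ with $F(0)=0$ and the resulting index shift is exactly the routine verification the paper dismisses as ``Clear.'' You have simply spelled out the intended bookkeeping, with the boundary cases handled properly.
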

\begin{proof}
Clear.
\end{proof}

We can use the above properties of samo functions to reduce our problem to the following simpler one:

\begin{proposition}\label{reduction}
Let $h(t)$ be defined by
\begin{equation}h(t)={\frac {t+2}{2t^2+t+1}}+{\frac 1 {1-2t}}\end{equation}
If $h(t)$ is samo, then the generating function $f(t)$ is 3-samo. 
\end{proposition}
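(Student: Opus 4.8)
The plan is to peel the pieces off $f$ one at a time using the samo closure properties of the preceding propositions, reducing ``$3$-samo'' all the way down to the single hypothesis that $h$ is samo. The guiding idea is to strip away the elementary factor $\frac{1}{1-t}$, recognize the remaining square-root factor as a pure exponential $e^{\log\rho}$, and show that its logarithm has a logarithmic derivative proportional to $h$.

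First I would rewrite $f$ in a factored form that isolates a single square root. Setting $\rho(t)=\sqrt{\frac{1-t}{(1-2t)(2t^2+t+1)}}$, a direct check gives
\[ g(t):=\frac{1}{\sqrt{(1-t)(1-2t)(2t^2+t+1)}}=\frac{\rho(t)}{1-t}, \qquad\text{so}\qquad f(t)=\frac{1}{2(1-t)}\bigl(\rho(t)-1\bigr). \]
Because $\frac{1}{2(1-t)}$ is manifestly samo (its Taylor coefficients are all positive), Proposition \ref{multiply} shows it suffices to prove that $\rho-1$ is $3$-samo. I would then climb down the tower of operations: since $\rho(0)=1$, Proposition \ref{add_const} reduces ``$\rho-1$ is $3$-samo'' to ``$\rho$ is $3$-samo''; Proposition \ref{log} reduces that to ``$\log\rho$ is $3$-samo''; and since $\log\rho$ vanishes at the origin, Proposition \ref{deriv} reduces it to ``$(\log\rho)'$ is $2$-samo.''

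The crux is an explicit computation of the logarithmic derivative. Differentiating $\log\rho=\frac{1}{2}\bigl[\log(1-t)-\log(1-2t)-\log(2t^2+t+1)\bigr]$ and clearing denominators, one finds the clean identity
\[ (\log\rho)'(t)=\frac{2t^2(3-2t)}{(1-t)(1-2t)(2t^2+t+1)}=\frac{2t^2}{1-t}\,h(t), \]
where I use that $h(t)=\frac{t+2}{2t^2+t+1}+\frac{1}{1-2t}$ simplifies to $\frac{3-2t}{(1-2t)(2t^2+t+1)}$ after combining over a common denominator. The payoff is immediate: the prefactor $\frac{2t^2}{1-t}=\sum_{i\geq 2}2t^i$ has all coefficients from degree $2$ onward strictly positive, so it is itself $2$-samo. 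Hence if $h$ is samo, Proposition \ref{multiply} makes $(\log\rho)'=\frac{2t^2}{1-t}h$ a product of a $2$-samo and a $0$-samo function, i.e.\ $2$-samo, and unwinding the chain of reductions above yields that $f$ is $3$-samo.

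I expect the main obstacle to be the bookkeeping at the start: guessing the correct normalization $\rho$ so that (a) the spurious $\frac{1}{1-t}$ pole cancels, and (b) the two otherwise troublesome factors $t^2$ and $\frac{1}{1-t}$ combine into the single $2$-samo function $\frac{2t^2}{1-t}$ rather than a bare $t^2$, which would not be $k$-samo for any $k$ and would block a naive application of Proposition \ref{multiply}. Once the logarithmic-derivative identity is in hand, every remaining step is a direct invocation of the closure propositions.
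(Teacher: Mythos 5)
Your proposal is correct and follows essentially the same route as the paper: the identical factorization $f(t)=\frac{1}{2(1-t)}\bigl(\rho(t)-1\bigr)$, the same key identity $(\log\rho)'(t)=\frac{2t^2}{1-t}h(t)$, and the same chain of closure propositions (\ref{multiply}, \ref{deriv}, \ref{log}, \ref{add_const}). The only difference is presentational: you verify the logarithmic-derivative identity by hand, whereas the paper defers it to the symbolic computation in Section \ref{log_derive_code}.
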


\begin{proof}
We can see by direct computation (cf. Section \ref{log_derive_code}) that
\begin{equation}{\frac {2t^2}{1-t}}h(t)={\frac d {dt}}\log {\frac {\sqrt{1-t}}{\sqrt{(1-2t)(2t^2+t+1)}}}\end{equation}

Assume $h$ is samo. The function ${\frac {2t^2}{1-t}}$ is clearly 2-samo, so by assumption on $h$ and Proposition \ref{multiply}, we conclude that RHS is 2-samo. By Proposition \ref{deriv}, we conclude that $\log {\frac {\sqrt{1-t}}{\sqrt{(1-2t)(2t^2+t+1)}}}$ is 3-samo, and by Proposition \ref{log}, we see that ${\frac {\sqrt{1-t}}{\sqrt{(1-2t)(2t^2+t+1)}}}$ is 3-samo. By Proposition \ref{add_const}, we have ${\frac {\sqrt{1-t}}{\sqrt{(1-2t)(2t^2+t+1)}}}-1$ is 3-samo. Finally, because ${\frac 1 {1-t}}$ is samo, we conclude by Proposition \ref{multiply} that 

\begin{equation}f(t)={\frac 1 {2(1-t)}}\left({\frac {\sqrt{1-t}}{\sqrt{(1-2t)(2t^2+t+1)}}}-1\right)\end{equation}
is 3-samo as desired.

\end{proof}
At this point, the natural thing is to analyze $h$. 

\begin{proposition}
\label{h_cfts}
The coefficients of the function $h(t)$ defined in Proposition \ref{reduction} are given by
\begin{equation}h(t)[t^n]=2Re(\phi^n)+2^n\end{equation}
where $\phi={\frac {-1+\sqrt{-7}}2}$ and $n\geq 0$.
\end{proposition}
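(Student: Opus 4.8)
The plan is to read off the coefficients by partial-fraction decomposition, handling the two summands of $h$ separately. The term $\frac{1}{1-2t}$ is immediate: its power series about the origin is $\sum_{n\geq 0}2^n t^n$, which contributes exactly the $2^n$ appearing in the claimed formula. All of the remaining content therefore lies in the proper rational function $\frac{t+2}{2t^2+t+1}$, whose $n$th coefficient I expect to match $2Re(\phi^n)$.

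The key preliminary step I would establish is that the denominator factors as $2t^2+t+1=(1-\phi t)(1-\bar\phi t)$, where $\phi=\frac{-1+\sqrt{-7}}{2}$ and $\bar\phi=\frac{-1-\sqrt{-7}}{2}$ is its complex conjugate. This is checked via the two symmetric functions $\phi+\bar\phi=-1$ and $\phi\bar\phi=2$, since then $(1-\phi t)(1-\bar\phi t)=1-(\phi+\bar\phi)t+\phi\bar\phi t^2=1+t+2t^2$, as required. Equivalently, $\phi$ and $\bar\phi$ are the reciprocals of the two roots of the quadratic, which is the natural normalization for extracting power-series coefficients.

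Next I would produce the decomposition itself, and here the computation collapses to the clean identity
\[\frac{t+2}{2t^2+t+1}=\frac{1}{1-\phi t}+\frac{1}{1-\bar\phi t},\]
which is verified directly by placing the right-hand side over the common denominator $(1-\phi t)(1-\bar\phi t)=2t^2+t+1$ and simplifying its numerator to $2-(\phi+\bar\phi)t=2+t$. Expanding each simple pole as a geometric series, $\frac{1}{1-\phi t}=\sum_{n\geq 0}\phi^n t^n$ (convergent in a neighborhood of the origin, the radius being bounded away from zero by $|\phi|=\sqrt{2}$), then gives the coefficient of $t^n$ as $\phi^n+\bar\phi^n=2Re(\phi^n)$. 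Adding back the $2^n$ from the first step yields the stated formula.

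I do not anticipate a genuine obstacle: the whole proposition is an elementary partial-fraction calculation once the factorization is spotted, and both residues at the conjugate poles turn out to equal $1$. The only point worth flagging is the bookkeeping with the complex-conjugate pair — one must confirm that the two residues are themselves conjugate so that their geometric series combine into a manifestly real $\phi^n+\bar\phi^n=2Re(\phi^n)$ rather than leaving a stray imaginary part. This is automatic because the original rational function has real coefficients, so the non-real poles and their residues occur in conjugate pairs.
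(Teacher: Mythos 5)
Your proposal is correct and is essentially the paper's own argument: the paper likewise reduces to the conjugate-pole identity, writing $\sum_n 2Re(\phi^n)t^n = \frac{1}{1-t\phi}+\frac{1}{1-t\overline{\phi}}$ and simplifying to $\frac{2+t}{1+t+2t^2}$, while you run the same computation in the reverse direction (partial fractions first, then geometric series), checking the same facts $\phi+\overline{\phi}=-1$ and $\phi\overline{\phi}=2$. The only cosmetic difference is that the paper verifies the denominator via $|1-t\phi|^2=(1+t/2)^2+7t^2/4$ rather than via the symmetric functions, and it treats the $\frac{1}{1-2t}$ term implicitly with ``it suffices to show,'' exactly as you do explicitly.
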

\begin{proof}
It suffices to show that ${\frac {t+2}{2t^2+t+1}}=\sum_n 2Re(\phi^n)t^n$. To wit, we have for sufficiently small $t$ that 
\begin{eqnarray}
\sum_n 2Re(\phi^n)t^n& = & 2Re({\frac {1}{1-t\phi}})\\
& = & {\frac {1}{1-t\phi}}+{\frac {1}{1-t\overline{\phi}}}\\
& = & {\frac {1-t\overline{\phi}+1-t\phi}{|1-t\phi|^2}}\\
& = & {\frac {2+t}{(1+t/2)^2+7t^2/4}}\\
& = & {\frac {2+t}{1+t+2t^2}}
\end{eqnarray}
\end{proof}

As a simple corollary:

\begin{proposition}
\label{pos_cfts}
    The function $h(t)$ is samo. 
\end{proposition}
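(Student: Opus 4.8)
The plan is to prove the equivalent statement that every Taylor coefficient of $h$ is strictly positive. Since $h^{(i)}(0)=i!\,[t^i]h(t)$, the function $h$ is samo precisely when $[t^n]h(t)>0$ for all $n\geq 0$. Thus Proposition \ref{h_cfts} reduces the entire question to showing that the closed form coefficient $2Re(\phi^n)+2^n$ is positive for every $n\geq 0$. This is why the result is billed as a \emph{simple} corollary: once we have the explicit coefficients, positivity becomes a matter of comparing the sizes of the two summands.

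First I would record the modulus of $\phi$. Since $\phi=\frac{-1+\sqrt{-7}}{2}=\frac{-1+i\sqrt 7}{2}$, we have $|\phi|^2=\frac{1+7}{4}=2$, so $|\phi|=\sqrt 2$. The key estimate is then the crude bound $|Re(\phi^n)|\leq|\phi^n|=2^{n/2}$, which yields
\begin{equation}2Re(\phi^n)+2^n\geq 2^n-2\cdot 2^{n/2}=2^{n/2}\left(2^{n/2}-2\right).\end{equation}
The right-hand side is strictly positive as soon as $2^{n/2}>2$, i.e. for all $n\geq 3$. In one stroke this settles all but finitely many of the coefficients, the point being that the geometric term $2^n$ eventually dominates the oscillating term $2Re(\phi^n)$, whose amplitude grows only like $(\sqrt 2)^n$.

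It remains to handle the low-order terms $n=0,1,2$, for which the exponential bound above is not yet effective; these I would simply evaluate by hand. Using $\phi^0=1$, $Re(\phi)=-1/2$, and $Re(\phi^2)=-3/2$, the coefficients $2Re(\phi^n)+2^n$ equal $3$, $1$, and $1$ respectively, all strictly positive. Combined with the estimate of the previous paragraph, this establishes $[t^n]h(t)>0$ for every $n\geq 0$, and hence that $h$ is samo.

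I do not anticipate any genuine obstacle: the argument is a routine size comparison, and the only subtlety is that the clean exponential bound cannot by itself cover the small cases. This is precisely because $\phi$ has modulus strictly between $1$ and $2$, so for small $n$ the oscillating contribution is not yet negligible and the finitely many exceptional coefficients must be verified directly. No new machinery is needed beyond Proposition \ref{h_cfts} and the elementary inequality $|Re(w)|\leq|w|$.
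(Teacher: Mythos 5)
Your proposal is correct and follows essentially the same route as the paper: bound the oscillating term via $|Re(\phi^n)|\leq|\phi|^n=2^{n/2}$ to get strict positivity of $2Re(\phi^n)+2^n$ for $n\geq 3$, then check $n=0,1,2$ directly. Your explicit evaluation of the small cases (coefficients $3$, $1$, $1$) matches what the paper leaves as an easy verification, so there is nothing to add.
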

\begin{proof}
By Proposition \ref{h_cfts}, we need to show that $2^n+2Re(\phi^n)>0$ for every n. We have 
\begin{equation}Re(\phi^n)\geq -|\phi|^n=-2^{n/2}\end{equation}
and therefore the coefficients are strictly positive provided that $n\geq 3$. The cases $n=0,1,2$ are easily verified directly.
\end{proof}

Combining Propositions \ref{reduction}, \ref{h_cfts} and \ref{pos_cfts}, we have proven the following: 

\begin{theorem}
The coefficients $f(t)[t^n]$ of the generating function are strictly positive for all $n\geq 3$. In particular, Bob is more likely to win the game than Alice for any $n\geq 3$.
\end{theorem}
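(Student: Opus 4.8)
The plan is to assemble the auxiliary results already in hand into the claimed positivity statement and then translate it into the language of win probabilities; essentially all of the analytic work has been discharged into the \emph{samo} calculus, so what remains is bookkeeping. The guiding observation is that $f(t)[t^n]=f^{(n)}(0)/n!$, so asserting strict positivity of the coefficients for $n\geq 3$ is exactly the assertion that $f$ is $3$-samo. The whole burden of the theorem is therefore to certify that $f$ is $3$-samo.

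First I would invoke Proposition \ref{pos_cfts}, which shows that the auxiliary function $h(t)=\frac{t+2}{2t^2+t+1}+\frac{1}{1-2t}$ is samo. Feeding this into Proposition \ref{reduction} immediately gives that $f$ is $3$-samo, that is, $f^{(i)}(0)\geq 0$ for $i<3$ and $f^{(i)}(0)>0$ for every $i\geq 3$. In particular $f(t)[t^n]=f^{(n)}(0)/n!>0$ for all $n\geq 3$, which is the first assertion of the theorem.

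For the ``in particular'' clause I would recall from Theorem \ref{gen_func_thm} that $f(t)=\sum_n \Delta_n (2t)^n$, whence $f(t)[t^n]=2^n\Delta_n$. Since $2^n>0$, positivity of the $n$th coefficient is equivalent to $\Delta_n>0$, and by definition $\Delta_n=P_n(Bob)-P_n(Alice)$, so this says precisely that Bob is more likely to win than Alice for every $n\geq 3$.

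I do not expect any genuine obstacle at this final stage: the only points needing care are that the index at which strict positivity begins is exactly $3$ (so that the threshold matches $n\geq 3$ rather than something larger or smaller) and that the positive factor $2^n$ linking $f(t)[t^n]$ to $\Delta_n$ cannot flip the sign. The real difficulty of the argument lives upstream---in the logarithmic-derivative identity underpinning Proposition \ref{reduction} and in the estimate $Re(\phi^n)\geq -2^{n/2}$ of Proposition \ref{pos_cfts}---and once those are granted the present theorem follows formally.
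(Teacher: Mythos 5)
Your proposal is correct and follows exactly the paper's route: the paper proves this theorem by combining Propositions \ref{reduction}, \ref{h_cfts}, and \ref{pos_cfts} (with \ref{h_cfts} already absorbed into the proof of \ref{pos_cfts}, just as in your write-up), and then reads off $\Delta_n>0$ from $f(t)[t^n]=2^n\Delta_n$. The only cosmetic remark is that $3$-samo is marginally stronger than the stated conclusion (it also asserts nonnegativity of the coefficients for $n<3$), but this does not affect the argument.
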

\subsection{Second proof of positivity}
\label{second_proof}
We now provide an alternative argument for the postivity of $\Delta_n$, by relating it to  combinatorial sequence which is manifestly positive. This proof uses the explicit form of the generating function $f(t)$ but is otherwise completely independent from the argument given in Section \ref{first_proof}.

\begin{definition}
Let $S\subset\mathbb{Z}_{\geq 0}^2-\{(0,0)\}$ be a finite subset of the non-negative plane lattice. Assume that there is an associated positive integer $c_s\geq 1$ for each $s\in S$; we denote the collection of all by $C=\{c_s\}_{s\in S}$. A \textbf{colored lattice path} is a finite sequence $\{(s_i,x_i)\}_i$ such that $s_i\in S$ and $x_i\in \{1,\dots, c_{s_i}\}$ for each i (the path may be empty). The \textbf{endpoint} of the path is defined to be $\sum_i s_i\in\mathbb{Z}^2$.
\end{definition}

Note that the values $c_s$ have an obvious interpretation as a collection of possible colors for each type of edge.  

\begin{definition}
Let $S,C$ be as above, and let $(a,b)\in\mathbb{Z}^2$. The number of colored lattice paths with endpoint $(a,b)$ is denoted by $N_{S,C}(a,b)$.
\end{definition}

We can now state the main result of this section.

\begin{theorem}
\label{lattice_thm}
The sequence $\Delta_n$ can be expressed in terms of a count of colored lattice paths:
\begin{equation}\Delta_n={\frac {N_{S,C}(n,n)-1}{2^{n+1}}}\end{equation}
where $S=\{(6,5),(0,1),(1,1),(3,3)\}$, $c_{(3,3)}=2$, and $c_{(i,j)}=1$ for $(i,j)\in S-\{(3,3)\}$. In particular $\Delta_n>0$ for $n\geq 3$.
\end{theorem}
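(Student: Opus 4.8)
The plan is to prove the identity by showing that both sides of the claimed equation have the same generating function in $t$, and then to read off positivity directly from the combinatorial description of the right-hand side. Since a colored lattice path is by definition a finite sequence of colored steps (each step $s$ in one of $c_s$ colors) and endpoints add, the bivariate generating function of the counts factors as a geometric series in the step polynomial:
\begin{equation}\sum_{(a,b)} N_{S,C}(a,b)\,x^a y^b={\frac 1 {1-\sum_{s\in S}c_s\,x^{s_1}y^{s_2}}}={\frac 1 {1-y-xy-2x^3y^3-x^6y^5}}.\end{equation}
The quantity I actually need, $N_{S,C}(n,n)$, is the diagonal of this rational function, so the first main task is to compute the diagonal generating function $D(t)=\sum_n N_{S,C}(n,n)\,t^n$ in closed form.

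To extract the diagonal I would use the standard contour-integral representation $D(t)={\frac 1 {2\pi i}}\oint G(z,t/z)\,{\frac {dz}z}$, valid for small $t$ with the contour a small circle about the origin. The decisive simplification is that the substitution $x=z,\ y=t/z$ sends each step $(s_1,s_2)$ to $t^{s_2}z^{s_1-s_2}$, and for the four steps of $S$ the exponents $s_1-s_2$ are $1,-1,0,0$; hence the step polynomial collapses to $t^5z+t/z+t+2t^3$, and after clearing the factor $1/z$ the integrand becomes $1/\bigl(-t^5z^2+(1-t-2t^3)z-t\bigr)$, a single quadratic in $z$. For small $t$ exactly one root of this quadratic lies inside the contour while the other escapes to infinity, and evaluating the residue at the interior root gives $D(t)=1/\sqrt{(1-t-2t^3)^2-4t^6}$.

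A short algebraic check then confirms $(1-t-2t^3)^2-4t^6=(1-t)(1-2t)(2t^2+t+1)$, so that $D(t)=1/\sqrt{(1-t)(1-2t)(2t^2+t+1)}$. Comparing with the closed form of Theorem \ref{gen_func_thm} yields
\begin{equation}\sum_n{\frac {N_{S,C}(n,n)-1}{2^{n+1}}}(2t)^n={\frac 12}\,D(t)-{\frac 1{2(1-t)}}=f(t),\end{equation}
and matching coefficients of $t^n$ gives $\Delta_n=(N_{S,C}(n,n)-1)/2^{n+1}$. Positivity is then immediate from the combinatorial picture: the path using only the step $(1,1)$ already contributes $1$ to $N_{S,C}(n,n)$, and for $n\geq 3$ the paths built from a single (doubly colored) $(3,3)$ step together with $n-3$ copies of $(1,1)$ contribute a further $2(n-2)\geq 2$, so that $N_{S,C}(n,n)\geq 2n-3>1$ and hence $\Delta_n>0$.

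I expect the main obstacle to be the diagonal extraction in the second step: one must justify the applicability of the contour-integral diagonal formula and, crucially, verify that for sufficiently small $t$ exactly one of the two roots of the quadratic denominator lies inside the chosen contour while the other runs off to infinity, so that the residue captures precisely $D(t)$ rather than a spurious sum. By contrast, the polynomial identity and the counting argument for positivity are entirely routine.
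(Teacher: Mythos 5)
Your proposal is correct, and its overall skeleton matches the paper's: bivariate generating function for the colored paths, extraction of the diagonal $D(t)$, identification with the closed form of Theorem \ref{gen_func_thm}, and a positivity read-off. The one substantive difference is in the middle step: where the paper invokes Stanley's diagonal formula (Lemma \ref{diag_gf}, cited as Exercise 6.15 of \cite{stanley}) as a black box, you derive the diagonal directly from the contour representation $D(t)={\frac 1 {2\pi i}}\oint G(z,t/z)\,{\frac {dz} z}$. This is in effect the standard proof of that exercise specialized to the case at hand: your observation that the exponents $s_1-s_2$ of the four steps are $1,-1,0,0$, making $G(z,t/z)$ the reciprocal of a single quadratic in $z$, is precisely the content of the paper's decomposition $1-xf(xy)-yg(xy)-h(xy)$ with $f(t)=t^5$, $g(t)=1$, $h(t)=2t^3+t$. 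Your residue evaluation is sound, and the obstacle you flag (root location) is real but routine: fix the contour radius $\rho$ first, then take $|t|$ small enough that the step polynomial has modulus less than $1$ on $|z|=\rho$ and the roots separate, with $z_-\approx t$ inside and $z_+\approx t^{-5}$ outside (their product is $t^{-4}$, so they cannot both stay bounded); the residue at $z_-$ then gives $1/\sqrt{(1-t-2t^3)^2-4t^6}$ on the branch with $D(0)=1$, and analytic continuation handles the degenerate point $t=0$. Your route buys self-containedness at the price of some complex analysis; the paper's buys brevity and generality, since the cited lemma covers arbitrary step sets satisfying its hypotheses. You also make explicit something the paper asserts only as ``follows immediately'': the witness count $N_{S,C}(n,n)\geq 1+2(n-2)=2n-3>1$ for $n\geq 3$, coming from the all-$(1,1)$ path together with the $2(n-2)$ paths containing a single bicolored $(3,3)$ step, which cleanly justifies the final positivity claim.
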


The proof will follow easily from two simple lemmas: 

\begin{lemma}
\label{lattice_gf}
The bivariate generating function $G_{S,C}(x,y)=\sum_{(a,b)\in\mathbb{Z}_{\geq 0}^2}N_{S,C}(a,b)x^ay^b$ is given by 
\begin{equation}G_{S,C}(x,y)={\frac 1 {1-\sum_{(i,j)\in S}c_{(i,j)}x^iy^j}}\end{equation}
\end{lemma}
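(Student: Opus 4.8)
The plan is to recognize this as an instance of the standard ``sequence'' (Kleene-star) construction in combinatorial generating-function theory and to carry it out directly. The central object is the one-step weight
\begin{equation}
W(x,y) = \sum_{(i,j)\in S} c_{(i,j)}\, x^i y^j,
\end{equation}
which records, via the monomial $x^i y^j$, each of the $c_{(i,j)}$ colored versions of the single step $(i,j)$. My first step is to reinterpret a colored lattice path as precisely a finite word over the alphabet of colored steps: a path of length $k$ is an ordered tuple $((s_1,x_1),\dots,(s_k,x_k))$ with $s_\ell\in S$ and $x_\ell\in\{1,\dots,c_{s_\ell}\}$, and its endpoint is $\sum_\ell s_\ell$. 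Under this identification $N_{S,C}(a,b)$ simply counts such words whose step vectors sum to $(a,b)$.

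The structural fact I would exploit is that the endpoint map is additive under concatenation: joining a path ending at $p$ to one ending at $q$ yields a path ending at $p+q$. Since the bookkeeping variables $x,y$ track exactly this additive endpoint statistic, the Cauchy product of series mirrors concatenation of paths. Concretely, letting $G_k(x,y)$ enumerate colored paths of length exactly $k$ by endpoint, I would prove by induction that $G_k = W^k$: the base case $k=0$ is the empty path, contributing the monomial for endpoint $(0,0)$, i.e. $1$; for the inductive step one peels off the final step, whose $c_{(i,j)}$ colored choices contribute the factor $W(x,y)$, while additivity of the endpoint turns the sum over decompositions into the product $G_{k-1}\cdot W = W^{k}$.

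Summing over all lengths then yields
\begin{equation}
G_{S,C}(x,y) = \sum_{k\geq 0} G_k(x,y) = \sum_{k\geq 0} W(x,y)^k = \frac{1}{1 - W(x,y)},
\end{equation}
which is the claimed formula. The one point demanding genuine care — and the reason the definition insists that $S\subset\mathbb{Z}_{\geq 0}^2-\{(0,0)\}$ — is the legitimacy of this geometric summation. Because $(0,0)\notin S$, every monomial of $W$ has strictly positive total degree, so $W$ has zero constant term; hence any path ending at $(a,b)$ has at most $a+b$ steps, each $N_{S,C}(a,b)$ is finite, and only finitely many values of $k$ contribute to a fixed coefficient of $G_{S,C}$. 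This makes $\sum_k W^k$ a well-defined formal power series equal to $(1-W)^{-1}$ (equivalently, the series converges for $|x|,|y|$ small). I do not anticipate a substantive obstacle: this is the transfer theorem for sequences, and the only real check is confirming the no-constant-term condition that guarantees well-definedness.
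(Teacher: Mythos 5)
Your proof is correct, and it takes a mildly different route to the formula than the paper does. Writing $W(x,y)=\sum_{(i,j)\in S}c_{(i,j)}x^iy^j$ as you do, the paper's proof is a two-line affair: peeling off the last segment of a path gives the coefficient recurrence
\begin{equation}
N_{S,C}(a,b)=\delta_a\delta_b+\sum_{(i,j)\in S}c_{(i,j)}N_{S,C}(a-i,b-j),
\end{equation}
with $N_{S,C}$ vanishing when a coordinate is negative and the delta term accounting for the empty path; multiplying by $x^ay^b$ and summing over $(a,b)$ turns this into the functional equation $G_{S,C}=1+W\cdot G_{S,C}$, which is solved directly to give $G_{S,C}=(1-W)^{-1}$. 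You instead stratify by path length, prove $G_k=W^k$ by induction (where your inductive step is the same last-step peel the paper uses globally), and then sum the geometric series $\sum_{k\geq 0}W^k$. The two derivations are equivalent in substance, but each buys something. The paper's functional-equation route never has to confront the summability of an infinite sum of power series, since it solves a single linear relation. Your route does have to confront it, and you handle it correctly: because $(0,0)\notin S$, the series $W$ has zero constant term, so a path ending at $(a,b)$ has at most $a+b$ steps and each coefficient of $\sum_k W^k$ receives only finitely many contributions. That finiteness observation is genuinely needed in either approach (it is also what makes $N_{S,C}(a,b)$ finite, hence what makes $G_{S,C}$ a well-defined formal series in the first place), and the paper leaves it entirely implicit; in that one respect your version is the more carefully justified of the two.
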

\begin{proof}
By considering the last segment in a path we obtain the relation:
\begin{equation}
N_{S,C}(a,b)=\delta_{a}\delta_b+\sum_{(i,j)\in S}c_{(i,j)}N_{S,C}(a-i,b-j)
\end{equation}
where $N_{S,C}(a,b)=0$ if $min(a,b)<0$ and where the $\delta$ term corresponds to the empty path. Multiplying both sides by $x^ay^b$ and summing over $a$ and $b$ gives the result.
\end{proof}
\begin{lemma}\label{diag_gf}(Stanley) 
Let $f,g,h$ be univariate polynomials with $h(0)=0$. Consider the bivariate function $G(x,y)={\frac 1 {1-xf(xy)-yg(xy)-h(xy)}}$. Then the Diagonal $D(t):=\sum_{n\geq 0}G(x,y)[x^ny^n]t^n$ is given by 
\begin{equation}D(t)={\frac 1 {\sqrt{(1-h(t))^2-4tf(t)g(t)}}}\end{equation}
\end{lemma}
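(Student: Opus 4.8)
The plan is to extract the diagonal by collapsing it to a single contour integral and then evaluating that integral by residues, in the same spirit as the computation behind Theorem \ref{gen_func_thm}. Write $a_{a,b}=G(x,y)[x^ay^b]$ for the Taylor coefficients of $G$, so that $D(t)=\sum_n a_{n,n}t^n$. The starting point is the standard diagonal identity
\begin{equation}D(t)=\int_C G(x,t/x)\,\frac{dx}{x},\end{equation}
where $C$ is a small circle about the origin and the implicit factor $\frac{1}{2\pi i}$ is in force. This holds because substituting $y=t/x$ into the series gives $G(x,t/x)=\sum_{a,b}a_{a,b}\,x^{a-b}t^b$, and $\int_C x^{a-b}\frac{dx}{x}=\delta_{a,b}$ isolates exactly the diagonal terms $\sum_a a_{a,a}t^a$. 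For $t$ small and $C$ of suitable radius, both $x$ and $t/x$ lie in the region of convergence of $G$, so interchanging summation and integration is legitimate.

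First I would simplify the integrand, using the one feature of the hypothesis that matters: along the curve $y=t/x$ the product $xy$ is the constant $t$, so every occurrence of $f(xy),g(xy),h(xy)$ collapses to the constants $f(t),g(t),h(t)$. Hence
\begin{equation}\frac{1}{x}\,G(x,t/x)=\frac{1}{-f(t)x^2+(1-h(t))x-tg(t)}=:\frac{1}{Q(x)},\end{equation}
a rational function of $x$ whose denominator $Q$ is a quadratic with coefficients polynomial in $t$, and whose two roots are
\begin{equation}r_\pm=\frac{(1-h(t))\pm\sqrt{(1-h(t))^2-4tf(t)g(t)}}{2f(t)}.\end{equation}

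Next I would evaluate the integral by residues. At $t=0$ the denominator becomes $Q(x)=x\bigl(1-f(0)x\bigr)$ (recall $h(0)=0$), so $1/Q$ has a simple pole at the origin with residue $1$ and at most one further pole away from the origin. By continuity the origin pole persists as a root $r_-(t)\to 0$ lying inside $C$ for all small $t$, while the companion root $r_+(t)$ stays outside $C$. The contour therefore captures only the simple pole at $r_-$, whose residue is $1/Q'(r_-)$. Since $Q'(x)=-2f(t)x+(1-h(t))$ and $2f(t)r_-=(1-h(t))-\sqrt{(1-h(t))^2-4tf(t)g(t)}$, this simplifies to $Q'(r_-)=\sqrt{(1-h(t))^2-4tf(t)g(t)}$, giving
\begin{equation}D(t)=\frac{1}{\sqrt{(1-h(t))^2-4tf(t)g(t)}}\end{equation}
as claimed.

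The step I expect to be the main obstacle is the analytic bookkeeping that makes the residue calculation rigorous rather than formal: I must confirm that a single radius can be chosen so that $C$ both sits inside the annulus of convergence of $G(x,t/x)$ and separates $r_-$ from $r_+$, for all $t$ in a punctured neighborhood of the origin. This is exactly the pole-tracking argument already carried out for Theorem \ref{gen_func_thm}, and it covers the degenerate case $f(0)=0$ relevant to the application (there $f(t)=t^5$) without change, since in that case the companion root $r_+$ simply escapes to infinity rather than tending to a finite limit. A secondary point is the branch of the square root: the sign is pinned down by the normalization $D(0)=a_{0,0}=1$, which forces $\sqrt{(1-h(t))^2-4tf(t)g(t)}\to 1$ as $t\to 0$, i.e. the principal branch. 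Once these points are settled, the equality of the two sides as power series in $t$ follows.
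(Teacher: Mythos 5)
Your proof is correct, but it is worth noting that the paper does not actually prove this lemma at all: its ``proof'' is a pointer to Exercise 6.15 of \cite{stanley}, where a solution is provided. What you have written is, in substance, that standard solution --- the classical diagonal-extraction method: represent the diagonal as $\frac{1}{2\pi i}\oint_C G(x,t/x)\frac{dx}{x}$, observe that the hypothesis is engineered exactly so that $xy=t$ is constant on the contour, so the integrand collapses to the reciprocal of the quadratic $Q(x)=-f(t)x^2+(1-h(t))x-tg(t)$, keep only the small root, and compute one residue. Your algebra checks out: from $2f(t)r_-=(1-h(t))-\sqrt{(1-h(t))^2-4tf(t)g(t)}$ one gets $Q'(r_-)=\sqrt{(1-h(t))^2-4tf(t)g(t)}$ on the nose, and the branch is correctly pinned by $D(0)=a_{0,0}=1$. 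You also identify the right analytic point and resolve it in the right order: fix the radius $\rho$ of $C$ inside the domain of absolute convergence first, then take $|t|$ small, so that $r_-(t)=\frac{2tg(t)}{(1-h(t))+\sqrt{(1-h(t))^2-4tf(t)g(t)}}=O(|t|)$ lies inside $C$ while $r_+(t)$ tends to the fixed nonzero limit $1/f(0)$ (or escapes to infinity when $f(0)=0$, as in the application with $f(t)=t^5$) and stays outside; since both sides are analytic in a punctured neighborhood of $t=0$ minus the finitely many zeros of $f$, equality of power series follows. What your version buys over the paper's is self-containedness, and it dovetails with the pole-tracking already done for Theorem \ref{gen_func_thm}. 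Two degenerate cases you could dispose of in a sentence for completeness: if $f\equiv 0$ then $Q$ is linear rather than quadratic, with unique root $tg(t)/(1-h(t))$ and residue $1/(1-h(t))$, which agrees with the claimed formula since the principal branch gives $\sqrt{(1-h(t))^2}=1-h(t)$ near $t=0$; and if $tg(t)=0$ then $r_-=0$ coincides with the origin, but the residue computation is unchanged. Neither arises in the paper's application, where $f(t)=t^5$, $g=1$, $h(t)=2t^3+t$.
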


\begin{proof} 
This is Exercise 6.15 in \cite{stanley},and a solution is also provided therein.  
\end{proof}
 
We now prove Theorem \ref{lattice_thm} from the two lemmas. Taking the $S$
and $C$ defined in the statment of the Theorem, we see from Lemma \ref{lattice_gf} that 
\begin{eqnarray}
    G_{S,C}(x,y)& =& {\frac 1 {1-x^6y^5-y-xy-2x^3y^3}}\\
    & = & {\frac 1 {1-x(x^5y^5)-y(1)-(2x^3y^3+xy)}}\\
    &= & {\frac 1 {1-xf(xy)-yg(xy)-h(xy)}}\\
    \end{eqnarray}
    where
\begin{eqnarray}
    f(t)&:=&t^5\\
    g(t)& := & 1\\
    h(t)& := & 2t^3+t
\end{eqnarray}
Therefore by Lemma \ref{diag_gf} the diagonal $D_{S,C}(t):=\sum_n N_{S,C}(n,n)t^n$ satisfies 
\begin{eqnarray}
D_{S,C}(t)& = & {\frac 1 {\sqrt{(1-h(t))^2-4tf(t)g(t)}}}\\
& = & {\frac 1 {\sqrt{(1-t-2t^3)^2-4t^6}}}\\
& = &{\frac 1 {\sqrt{4t^4-4t^3+t^2-2t+1}}}\\
& = & 2f(t)+{\frac 1 {1-t}}
\end{eqnarray}
where $f(t)=\sum_n \Delta_n(2t)^n$ and we have used Theorem \ref{gen_func_thm} in the last line. Theorem \ref{lattice_thm} follows immediately.

\section{Consequences of the Proof}

\subsection{Asymptotic Analysis}
 It is easy to see that $\Delta_n\to 0$, but a natural and non-obvious follow up question is how fast is the convergence? Using the explicit form of the generating function, this can answered with a simple application of the famous Darboux formula (cf. \cite{wilf}).

Notation-wise, we define $d_n:=2^n\Delta_n+1/2$, which are the coefficients of the function $\tilde{f}(t)={\frac 1 2}{\frac 1{\sqrt{(1-t)(1-2t)(2t^2+t+1)}}}$. Observe that $\tilde{f}(t)\sqrt{1/2-t}$ can be extended to a homolomorphic function in the disc $|z|<1/2+\epsilon$. Darboux's formula then implies the following asymptotic form: 

\begin{equation}2^{-n}d_n={\frac 1 2}{n-1/2\choose n}+O(n^{-3/2})={\frac 1 {2\sqrt{\pi}}}{\frac {\Gamma(n+1/2)}{\Gamma(n+1)}}+O(n^{-3/2})\end{equation}

By Stirling's formula, we in particular obtain: 

\begin{equation}\Delta_n= {\frac 1 {2\sqrt{n\pi}}} +O(n^{-3/2})\end{equation}

The same result was also obtained in \cite{ekhad} through different means.
\subsection{Recurrence Relation}
\label{recurrence_algo}
Here we provide an algorithm to compute $\Delta_n$ in $O(n)$ time. To do so, we will use the reparametrization $d_n=2^n\Delta_n+1/2$ from above and compute $d_n$. The main observation is that the generating function $\tilde{f}(t)$ for $d_n$ is an inverse square root of a rational function. In particular, it satisfies an algebraic relation of the form $p(t)\tilde{f}(t)+q(t)\tilde{f}'(t)=0$ for some polynomials $p$ and $q$. By equating the terms to zero, we see that the coefficients of $\tilde{f}$ satisfy a linear recurrence relation. Crucially, the number of terms in the recurrence relation does not depend on $n$ (i.e. it is bounded by the degrees of $p$ and $q$). Therefore, directly iterating the recurrence allows us to compute the nth term in $O(n)$ time. 

To give the details, we obtain by direct computation: 
\begin{equation}(\log \tilde{f})'(t)={\frac {-8t^3+6t^2-t+1}{(1-x)(1-2t)(2t^2+t+1)}}\end{equation}

In particular, we get an equation:

\begin{equation}(4t^4-4t^3+t^2-2t+1)\tilde{f}'(t)+(8t^3-6t^2+t-1)\tilde{f}(t)=0\end{equation}

Setting the coefficients of the LHS to zero gives the recurrence:

\begin{equation}(-8/n+4)d_{n-4}-(-6/n+4)d_{n-3}+(-1/n+1)d_{n-2}-(-1/n+2)d_{n-1}+d_{n}=0\end{equation}

We can also plug in $d_n=2^n\Delta_n+1/2$ to obtain a recurrence directly for $\Delta_n$:

\begin{eqnarray*}
\Delta_n&=&{\frac 1 {n2^n}}+\left({\frac 1 {2n}}-{\frac 1 4}\right)\Delta_{n-4}+\left(-{\frac {3}{4n}}+{\frac 1 2}\right)\Delta_{n-3}+\left({\frac 1{4n}}-{\frac 1 4}\right)\Delta_{n-2}+\left(-{\frac 1{2n}}+1\right)\Delta_{n-1}
\end{eqnarray*}

A similar recurrence relation had been obtained by \cite{ekhad} using a different argument. 

An obvious consequence is that this algorithm not only computes $\Delta_n$ in $O(n)$ time, but can in fact generate the entire sequence $\Delta_1,\dots, \Delta_n$ in $O(n)$ time.

\section{Appendix-symbolic calculations}
\label{appendix}
We provide code to verify the omitted calculations in the main text using the Python package Sympy.

\subsection{Equation \ref{expand_inverse_eqn}}
\label{integrand_code}
\begin{lstlisting}[language=Python]
from sympy import *
z,t=symbols(`z t')
P=t**3*(z**3-z**2-z+1)
Q=(t**2*z**2+(t-1)*z+(t-t**2))*((t**2-t)*z**2+(1-t)*z-t**2)
A=Matrix([[1,1/z],[1,z]])
A2=A.subs(z,1/z)
I=Matrix([[1,0],[0,1]])
integral=t*sum((I-t*A).inv()-(I-t*A2).inv())/(1-z)
simplify(integral-P/Q)#returns 0
\end{lstlisting}

\subsection{Residue calculation (Theorem \ref{gen_func_thm})}
\label{residue_code}

\begin{lstlisting}[language=Python]
from sympy import *
z,t=symbols(`z t')
P=t**3*(z**3-z**2-z+1)
Q=(t**2*z**2+(t-1)*z+(t-t**2))*((t**2-t)*z**2+(1-t)*z-t**2)
delta=4*t**4-4*t**3+t**2-2*t+1
r1=(-t+1-sqrt(delta))/(2*t*t)
r2=(t-1+sqrt(delta))/(2*t*t-2*t)
R=P/diff(Q,z)
gf=R.subs(z,r1)+R.subs(z,r2)
gf2=(1/sqrt(delta)-1/(1-t))/2
simplify(gf-gf2) #returns 0
\end{lstlisting}
\subsection{Logarithmic derivative (Proposition \ref{reduction})}
\label{log_derive_code}
\begin{lstlisting}[language=Python]
h=(t+2)/(2*t*t+t+1)+1/(1-2*t)
Q=(1-t)/((1-2*t)*(2*t*t+t+1))
simplify(diff(log(sqrt(Q)))-2*t*t*h/(1-t))#returns 0

\end{lstlisting}
\printbibliography

\end{document}